\author{Stefanos Leonardos\\ Singapore University of Technology \\and Design \And Costis Melolidakis\\ National and Kapodistrian University\\ of Athens}
\title{On the Mean Residual Life of Cantor-Type Distributions: Properties and Economic Applications}
\email{stefanos\_leonardos@sutd.edu.sg}\\
\email{cmelol@math.uoa.gr}
\newcommand{\ex}{\mathbb{E}}
\newcommand{\F}{\bar{F}}
\newcommand{\m}{\mathrm{m}}
\newcommand{\e}{\mathrm{\ell}}
\newcommand{\du}{du}
\renewcommand{\F}{\bar{F}}
\renewcommand{\(}{\left(}
\renewcommand{\)}{\right)}
\newcommand{\C}{\mathcal C}
\newtheorem{theorem}{Theorem}[section] 
\newtheorem{lemma}[theorem]{Lemma} 
\theoremstyle{definition}
\newtheorem{application}[theorem]{Application} 
\newtheorem{remark}[theorem]{Remark} 
\begin{document}

\section{Introduction}
\label{sec:introduction}
Theoretical study of complex social and economic phenomena entails a trade-off between tractability and robustness of the model to perturbations of its working assumptions. This trade-off is particularly evident in the study of decision making under uncertainty: to optimize the outcome of their actions, economic agents and social planners need to accurately reason about parameters for which they only have scarce information at the time of the decision, such as economic growth or retail demand. In many cases, the quest for simplicity (and hence, tractability) of the selected theoretical model leads to the adoption of simplifying assumptions which in turn, compromise the robustness of the resulting decision rules in practice.\par
In this context, singular distributions, although naturally emerging even in low-dimensional economic settings \cite{Mac97,Mit04, Mit06}, are typically avoided due to their perceived intractability. While the use of absolutely continuous distributions may deliver accurate predictions in multiple settings, see e.g., \cite{La06,Ba13}, in other cases, the lack of accuracy in the modelling of uncertainty leads to conservative pricing strategies due to the consideration of worst-case scenarios or more generally to significantly suboptimal decisions \cite{Ber11,Co16}.\par
In the present paper, motivated by the recurrent theme of monopoly pricing under demand uncertainty in markets with linear stochastic demand, we study the properties of the \emph{mean residual life} (MRL) function $m\(x\), x\in [0,1]$ of the Cantor distribution (cf. equation \eqref{mrl}). This setting complements a larger study that is presented in \cite{Le18} in which we derive mild sufficient conditions on the demand distribution --- or more accurately on the seller's belief about the demand --- that ensure a unique optimal price.\footnote{Other related works include \cite{Leon20,Leo20,Leon21}, preliminary versions of which appear in \cite{Bel18,Kok18}.} The resulting class of distributions is particularly inclusive since it only requires that the cumulative distribution function (CDF) $F\(x\)$ is continuous, that the distribution has finite second moment and that the term $m\(x\)/x$ is decreasing. Optimal prices correspond to fixed points of the MRL function and these conditions are enough to ensure existence and uniqueness of a fixed point. \par

Interestingly, some mild regularity conditions --- continuity of the CDF and finiteness of moments --- are also satisfied by the singular Cantor distribution \cite{Do06}. This provides an incentive to test the applicability of our derived pricing rule under minimum possible assumptions. Specifically, if we only require that the CDF $F\(x\)$ is continuous over a finite interval, while $m\(x\)/x$ may not be decreasing, then can we still have existence of a unique optimal price, i.e., of a unique fixed point of the MRL function? The present paper addresses precisely this question and establishes that this indeed true in the context of the Cantor distribution. More specifically, our findings are the following.
\begin{itemize}[noitemsep,leftmargin=0.4cm]
\item We leverage a result of \cite{Do06} to show that the MRL function of the Cantor distribution is continuous at all points and locally decreasing at all points outside the Cantor set. We then proceed to show that this implies the existence of a unique fixed point of the MRL function, i.e., of a unique solution to the equation $m\(x\)=x, x\in[0,1]$, which we explicitly determine at $x=5/12$. These findings are formalized in \Cref{thm:cantor}. 
\item We then consider a parametric family of $p$-singular Cantor-type distributions originally discussed by \cite{Man83} and \cite{Ch91}. We show that \Cref{thm:cantor} readily extends to the case of arbitrary $p>0$ and provide a closed form expression for the unique fixed point in $[0,1]$ of any such distribution, cf. \Cref{thm:extension}. We complement our results with illustrations of the cumulative distribution functions and MRL functions of the Cantor-type distributions for various values of $p>0$ and provide an (simple) implementation in Matlab R2019b of the algorithm originally suggested by \cite{Ch91} to reproduce the illustrations. 
\end{itemize}
While the above findings provide an affirmative answer to the robustness of our derived pricing rule --- i.e., that existence and uniqueness of a fixed point of the MRL function may occur even under the minimum possible assumption of continuity of the CDF over a closed interval --- they leave an integral question unanswered: are there realistic economic settings which can be modelled via Cantor-type distributions? Perhaps surprisingly, the answer to this question is affirmative. \par
Apart from the natural emergence of Cantor type attractors in supply and demand models and models of economic growth \cite{Ono00, Mit06} and \cite{Ono18}, the exact shape of the MRL functions, cf. \Cref{fig:cantor} and \Cref{fig:type<1,fig:type>1}, conveys to Cantor-type distributions an interesting property in the context of modelling demand uncertainty. Specifically, the kind of non-monotonic behavior that they exhibit may be leveraged to model markets with positive demand shocks that are expected to occur whenever the demand reaches or exceeds some threshold or equivalently \emph{bandwagon effects}, i.e., situations in which demand leads to more demand \cite{Her09}. Such shocks may be due to successful introduction of the product to new market segments following its success in its currently operating markets or adoption of the product by new groups of consumers following its increased visibility via its current consumers. Beyond product or service consumption, bandwagon effects are also widespread in social and political phenomena that include voting preferences and diffusion of ideas \cite{Bar19}.\par
To demonstrate the potential use of the parametric family of $p$-singular Cantor-type distributions in the modelling and study of this kind of markets, we present an application --- monopoly pricing under demand uncertainty. Based on the shape of the MRL function, the $p$ parameter can be seen to control the location and intensity of the positive demand shocks. The existence of a unique fixed point of the MRL function leads to a unique optimal price described by the closed form expression in \Cref{thm:extension} and provides useful insight about monopolistic pricing in such markets. In particular, anticipation of bandwagon effects for lower demand values leads to higher monopolistic prices which in turn, are known to produce negative effects on consumer and social welfare. \par
In a nutshell, while our results may be of independent theoretical interest in the study of singular distributions, they also suggest that singular distributions can be more useful than commonly perceived in the modelling and study of social and economic phenomena.

\section{The mean residual life function of the Cantor distribution}
\label{sec:main}

For $C_0=[0,1]$ and $C_n=\frac{C_{n-1}}{3}\cup\(\frac23+\frac{C_{n-1}}{3}\)$ for $n\ge 1$, the Cantor set $\C$ is defined as $\C:=\bigcap_{n=1}^{\infty} C_n$. The Cantor distribution is defined as the distribution that is uniform on the Cantor set \citep{Gu13}. Let $F$ denote its cumulative distribution function and let $X\sim F$ denote a random variable with distribution $F$. $F$ is continuous at every $x\in[0,1]$ and $F'\(x\)=0$ almost everywhere which means that $F$ is a \emph{continuous singular} distribution. Also, let  
\begin{equation}\label{mrl}\m\(x\):=\begin{cases}\ex\(X-x\mid X>x\)=\displaystyle\frac{1}{\vphantom{\tilde{\F}}\F\(x\)}\int_{x}^{1}\F\(u\)\du, & \text{if } x<1\\0, & \text{otherwise}\end{cases}
\end{equation}
denote the \emph{mean residual life} (MRL) function of $X$ \citep{Sh07,Lax06}. Fixed points of the MRL function, i.e., solutions of the equation $\m\(x\)=x$, are related to optimal solutions of optimization problems, cf. \Cref{app:application}. When studying solutions of the fixed point equation, it is sometimes useful to study directly the function $\m\(x\)/x$. To that end, let $\e\(x\):=\m\(x\)/x$, for $0<x<1$, denote the \emph{generalized mean residual life (GMRL)} function of $X$ \citep{Le18}. Since $X$ is non-negative, we have that $\m\(0\)=\ex X=1/2$, see e.g., \cite{Gu13}. $X$ has the \emph{decreasing mean residual life} (DMRL) property, if $\m\(x\)$ is non-increasing for $x<1$ and similarly, $X$ has the \emph{decreasing generalized mean residual life} (DGMRL) property, if $\e\(x\)$ is non-increasing for $x<1$. Finally, we will say that a continuous function $f : \(0, 1\) \to \mathbb R$ is \textit{locally monotone} at $x \in \(0, 1\)$ if there is an open neighborhood $U$ of $x$ such that $f|_U$ is monotone.

\begin{theorem}\label{thm:cantor}
Let $\m\(x\)$ for $x\in[0,1]$ denote the MRL function of the Cantor distribution. Then 
\begin{enumerate}[label=$\(\roman*\)\;$, noitemsep]
\item $\m\(x\)$ is continuous with $\m\(x\)=\frac{1}{F\(1-x\)}\int_{0}^{1-x}F\(u\)\du$, for all $x\in [0,1]$,
\item both $\m\(x\)$ and $\e\(x\)$ are locally decreasing at a point $x\in \(0,1\)$ if and only if $x\in \left[0,1\right]\setminus C$.
\item the unique solution of the fixed point equation $\m\(x\)=x$ in $\left[0, 1\right]$ is $x^*=\frac{5}{12}$.
\end{enumerate}
\end{theorem}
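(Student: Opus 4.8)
For part~(i), the plan is to use that $\C$ is symmetric about $1/2$, so $1-X$ has the same distribution as $X$; since $F$ is continuous this gives $\F(x)=F(1-x)$ for every $x\in[0,1]$. Substituting $u\mapsto 1-u$ in $\int_x^1\F(u)\du$ and using $\F(1-u)=F(u)$ then yields the stated formula $\m(x)=\frac1{F(1-x)}\int_0^{1-x}F(u)\du$. Continuity on $[0,1)$ is immediate, since $F$ is continuous and $\F(x)>0$ there (as $1\in\operatorname{supp}\mu$); continuity at $x=1$ follows from $0\le\m(x)=\frac1{\F(x)}\int_x^1\F(u)\du\le\frac1{\F(x)}\int_x^1\F(x)\du=1-x\to0$.

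For part~(ii), write $[0,1]\setminus\C$ as the disjoint union of the open ``middle-third'' gaps; $F$ is constant on the closure of each. Feeding a constant $\F$ into \eqref{mrl} shows that on every gap $\m$ is affine with slope $-1$, whence also $\e'(x)=-\big(x+\m(x)\big)/x^{2}<0$; so both $\m$ and $\e$ are locally decreasing at every $x\notin\C$. For the converse I would show that at $x_0\in\C\cap(0,1)$ the function $\m$ is not non-increasing on any neighbourhood. The key point is that $g(x):=x+\m(x)=\E(X\mid X>x)$ is non-decreasing in $x$ (conditioning on a larger threshold cannot decrease the conditional mean), is constant on each gap, and is strictly increasing across any interval carrying positive $\mu$-mass. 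Since every neighbourhood of $x_0\in\operatorname{supp}\mu=\C$ has positive mass, $g$ is strictly increasing there; but if $\m$ were non-increasing on such a neighbourhood $U$, then $0\le g(v)-g(u)=\big(\m(v)-\m(u)\big)+(v-u)\le v-u$ for all $u<v$ in $U$, so $g|_U$ is $1$-Lipschitz, hence absolutely continuous with $g'=0$ almost everywhere (the gaps have full Lebesgue measure), hence constant on $U$ --- a contradiction.

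For part~(iii), I would exploit the self-similarity of $\mu$, which gives
\begin{equation*}
\F(x)=\tfrac12+\tfrac12\F(3x)\ \text{on}\ [0,\tfrac13],\qquad \F(x)=\tfrac12\ \text{on}\ [\tfrac13,\tfrac23],\qquad \F(x)=\tfrac12\F(3x-2)\ \text{on}\ [\tfrac23,1],
\end{equation*}
and, upon integration, matching recursions for $G(x):=\int_x^1\F(u)\du$. On $[\tfrac13,\tfrac23]$ these produce $G(x)=\tfrac5{12}-\tfrac x2$ and hence $\m(x)=\tfrac56-x$, so on that interval $\m(x)=x$ has exactly one solution, $x^{*}=\tfrac5{12}\in(\tfrac13,\tfrac23)$. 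On $(\tfrac23,1]$ one obtains $\m(x)=\tfrac13\m(3x-2)\le\tfrac13<x$ (using $\m(y)\le 1-y\le1$), so there is no fixed point there. On $[0,\tfrac13]$ it remains to prove $\m(x)>x$, i.e.\ $g(x)>2x$: since $g$ is non-decreasing with $g(0)=\tfrac12$, and solving the self-similar relations at $x=\tfrac14$ gives $\m(\tfrac14)=\tfrac9{20}$, hence $g(\tfrac14)=\tfrac7{10}$, we get $g(x)\ge\tfrac12>2x$ for $x<\tfrac14$ and $g(x)\ge\tfrac7{10}>\tfrac23\ge2x$ for $x\in[\tfrac14,\tfrac13]$. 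Combining the three regimes, $x^{*}=\tfrac5{12}$ is the unique fixed point in $[0,1]$.

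The step I expect to be the main obstacle is the uniqueness claim in part~(iii) on $[0,\tfrac13]$: since $\m$ is genuinely non-monotone there, no direct monotonicity argument applies, and one must combine the self-similar reduction with the monotonicity of $g$ and bootstrap from a single explicit value such as $g(\tfrac14)$. A secondary subtlety is the ``only if'' half of part~(ii), which requires the absolute-continuity argument above rather than a naive derivative computation, because neither $F$ nor $\m$ need be differentiable at points of $\C$.
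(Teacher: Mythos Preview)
Your proposal is correct and in several places takes a genuinely different route from the paper. For part~(i) you argue exactly as the paper does via the symmetry $\F(x)=F(1-x)$; your bound $\m(x)\le 1-x$ for continuity at $1$ is more elementary than the paper's citation of an external result. For part~(ii) the paper invokes a theorem of Dovgoshey et al.\ on products $f\phi$ with $f$ monotone and piecewise constant, applied to $f(x)=1/F(1-x)$; your argument is self-contained: the ``if'' direction via the explicit slope $-1$ of $\m$ on each gap, and the ``only if'' via the observation that $g(x)=\E(X\mid X>x)$ is non-decreasing, constant on gaps, and would be forced to be absolutely continuous (hence constant) on any neighbourhood where $\m$ is non-increasing. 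One small omission: you state the converse only for $\m$, not for $\e$; the same idea works (apply your Lipschitz/absolute-continuity argument to $\log g(x)=\log x+\log(1+\e(x))$, the second summand being non-increasing if $\e$ is). For part~(iii) the paper proves $\m(x)>x$ on $[0,1/3]$ by a Lipschitz-type lemma ($|\m(x)-x-(\m(y)-y)|<2\delta$ on intervals of length $\delta$) evaluated at the ad~hoc points $y=0$ and $y=20/81$; your approach via the self-similar recursion $\m(x)=\tfrac13\m(3x-2)$ on $[2/3,1]$ and the monotonicity of $g$ bootstrapped from $g(0)=\tfrac12$ and $g(\tfrac14)=\tfrac7{10}$ is structurally cleaner and yields the same two-interval covering of $[0,\tfrac13]$ with simpler arithmetic. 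Both approaches ultimately hinge on an explicit auxiliary value to bridge the interval where $\m$ is non-monotone; yours exploits the self-similarity (solving $G(1/4)$ from the functional equation) while the paper's exploits an exact evaluation at a ternary-rational point.
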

The statements of \Cref{thm:cantor} are illustrated graphically in \Cref{fig:cantor}.

\begin{figure}[!htb]
\centering
\begin{minipage}{.5\textwidth}
  \centering
  \includegraphics[width=\linewidth]{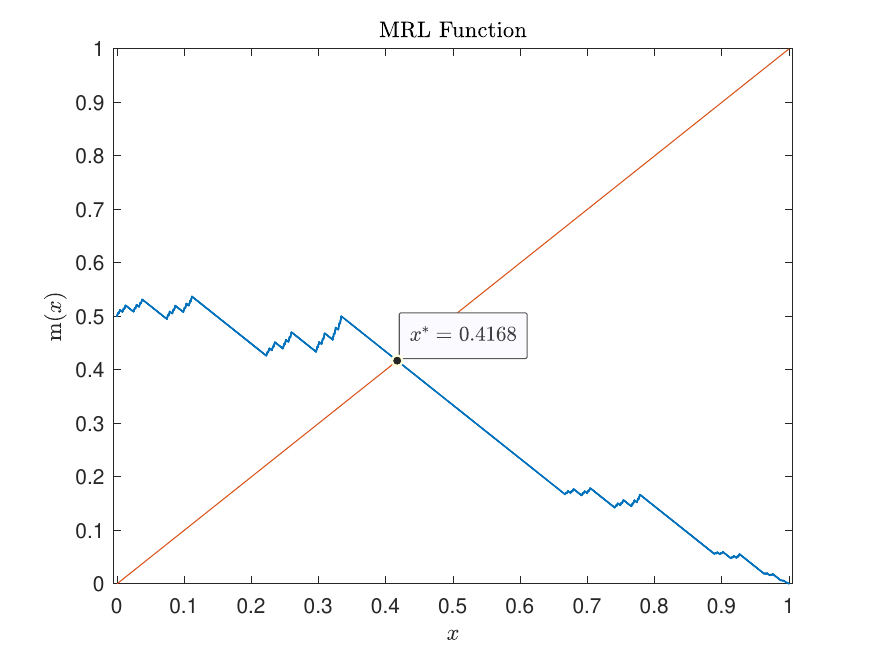}
\end{minipage}%
\begin{minipage}{.5\textwidth}
  \centering
  \includegraphics[width=\linewidth]{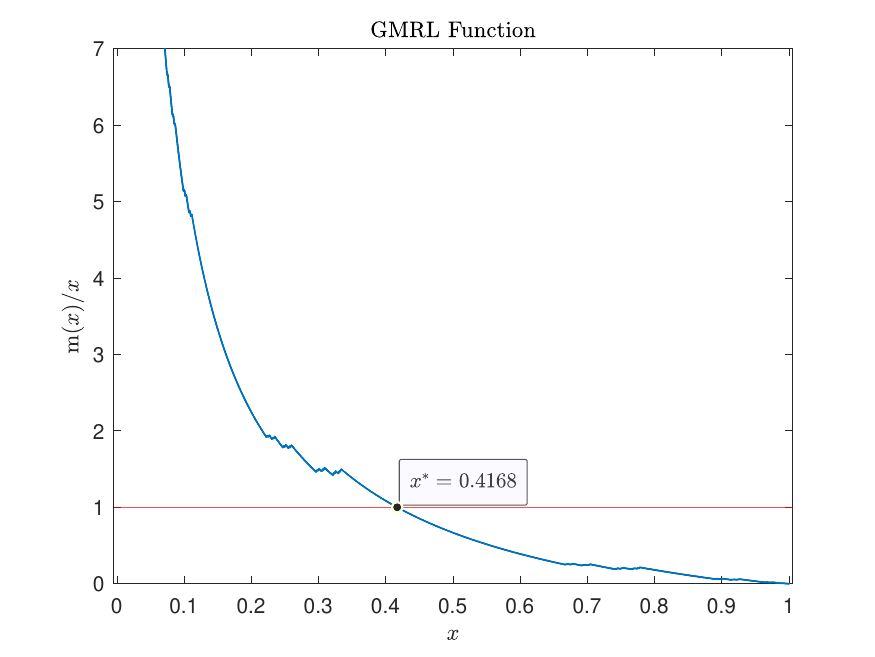}
\end{minipage}
\caption{Cantor distribution: the left panel shows the MRL function $\m\(x\)$ and the right panel shows the GMRL function $\e\(x\)=\m\(x\)/x$ for values away from $0$ where the GMRL grows to infinity. The unique fixed point of $\m\(x\)$ at $x^*=5/12$ is shown in both panels. }
\label{fig:cantor}
\end{figure}

\begin{proof}[Proof of \Cref{thm:cantor}] Using that $\F\(x\)=F\(1-x\)$, for all $x\in [0,1]$, see, e.g., \cite{Ch91}, and by a change of variable in the integration, we obtain (i): \[\m\(x\)=\frac{1}{\F\(x\)}\int_{x}^{1}\F\(u\)\du=\frac{1}{F\(1-x\)}\int_{x}^{1}F\(1-u\)\du=\frac{1}{F\(1-x\)}\int_{0}^{1-x}F\(u\)\du\] The continuity of $\m\(x\)$ is immediate, since $\m\(x\)$ is the quotient of two continuous functions (with the denominator being throughout non-zero) with $\lim_{x\to1-}\m\(x\)=0$ by \cite{Ha81}, Proposition 2(d). Statement (ii) follows from Theorem 3.7 of \cite{Do06} on $f\(x\):=\frac{1}{F\(1-x\)}$ and $\phi\(x\):=\int_{0}^{1-x}F\(u\)\du$. Indeed, $f: \(0,1\)\to \left[0,+\infty\)$ is increasing and continuous and $\phi: \(0,1\)\to\left[0,+\infty\)$ is strictly decreasing with finite derivative $\phi'\(x\)$ at every $x\in \(0,1\)$. The set of constancy of $f$, $L_f$, is given by $L_f=\left[0,1\right]\setminus C$ and has Lebesgue measure $1$. Hence, by the above Theorem, the product $f\phi$ is locally decreasing at a point $x\in\(0,1\)$ if and only if $x\in L_f$. Since $\m\(x\)=f\(x\)\phi\(x\)$, the result follows. Taking $\phi\(x\):=\frac1x\int_{0}^{1-x}F\(u\)\du$, establishes the result for $\e\(x\)$. Statement (iii) of \Cref{thm:cantor} can be derived by applying \Cref{lem:cantor}--(i) on $0\le y\le 1/3$ and \Cref{lem:cantor}--(ii) on $2/3\le y < 1$. Specifically, using the symmetry of $F$, it is a straightforward calculation to verify the values of $m\(x\)$ in the following steps:
\begin{enumerate}[itemindent=0.5cm,label=step \arabic*:]
\item for $y=0, \delta=1/4$, since $\m\(0\)=\ex\(\alpha\)=1/2$, we get $\m\(x\)-x>0$ for all $0\le x<1/4$.
\item for $y=\frac{20}{81}, \delta=\frac{343}{3564}$, since $\m\(20/81\)=\frac{29}{66}$, we get $\m\(x\)-x>0$ for all $\frac{20}{81}\le x<\frac{1223}{3564}$.
\end{enumerate}
Combining steps 1. and 2., we obtain that $\m\(x\)-x>0$ for all $x \in [0,1/3]$. For $x \in \left[1/3,2/3\right]$, we write $x=1/3+t$ with $0\le t\le 1/3$ and by \Cref{lem:cantor}--(iii) 
\[\m\(1/3+t\)=\m\(1/3\)-t=\frac{1}{F\(2/3\)}\int_{0}^{2/3}F\(u\)du=1/2-t.\] Hence, the fixed point equation $\m\(1/3+t\)=1/3+t$ for $0\le t\le 1/3$ is equivalent to $1/2-t=1/3+t$ and therefore its only solution is given by $t=1/12$. This shows that $x^*=1/3+1/12=5/12$ is the only solution of the fixed point equation $x^*=\m\(x^*\)$ in $\left[1/3,2/3\right]$. For $2/3< x<1$ we apply \Cref{lem:cantor}--(ii) for $y=1-\epsilon$  with $\epsilon>0$ arbitrary and $\delta=1/3$, which gives that for all $2/3<x<1-\epsilon$, $\m\(x\)-x<\m\(1-\epsilon\)-1+2\cdot\(1/3\)=\m\(1-\epsilon\)-\(1/3\)<0$, since $\lim_{x\to1-}\m\(x\)=0$. Since $\epsilon>0$ was arbitrary this concludes the proof of \Cref{thm:cantor}--(iii).
\end{proof}

The tractability of the Cantor distribution mainly stems from the continuity of its cumulative distribution function which is inherited by its MRL function. Based on \Cref{thm:cantor}, the Cantor distribution does not satisfy the DMRL nor the DGMRL property, yet its MRL function has a unique fixed point. \\

\section{Extension: parametric family of Cantor-type distributions}
\label{sec:extension}
\cite{Ch91} studies the parametric family of \emph{$p$-singular Cantor-type} random variables $X_p$ for $p>0$ with cumulative distribution functions $F_p$ characterised as the unique real-valued, monotone increasing functions that satisfy the conditions
\begin{enumerate}[label=(\roman*)]
\item $F_p\(x/3\)=F_p\(x\)/\(p+1\)$, for $x\in[0, 1]$,
\item $F_p\(1-x\)=1-pF_p\(x\)$, for $x\in[0, 1/3]$.
\end{enumerate}
Referring to a discussion by \cite{Man83}, \cite{Ch91} notes that the corresponding probability measures are invariant under the \emph{inverted V} transformation. In the defining conditions, he also requires that $F_p\(0\)=0$ which follows directly from (i). Moreover, from (i) and (ii) it follows that $F_p\(1/3\)=F_p\(2/3\)=1/\(p+1\)$ and hence, that $F_p\(x\)=1/\(p+1\)$ (due to monotonicity) for all $x\in[1/3,2/3]$. Accordingly, condition (ii) extends to any $x\in[0,2/3]$. The Cantor distribution is the unique real valued, monotone increasing function that corresponds to $p=1$, in which case condition (ii) holds for any $x\in[0,1]$, cf. proof of \Cref{thm:cantor}. \par
In this section, we show that the statements of \Cref{thm:cantor} readily generalize to any $p>0$. In particular, the MRL function $\m_p\(x\)$ of any $p$-singular Cantor type distribution has a unique fixed point which can be explicitly calculated as a function of $p$. Such fixed points are decreasing in $p$ and lie within $\(3/8,1/2\)$ with the lower and upper limits approximated for $p\to+\infty$ and $p\to0^+$ respectively. These findings are formalized in \Cref{thm:extension}.

\begin{theorem}\label{thm:extension}
Let $F_p\(x\)$ and $\m_p\(x\)$ for $x\in[0,1]$ denote the cumulative distribution function (CDF) and mean residual life (MRL) function, respectively, of the $p$-singular Cantor-type random variable $X_p$. Then, for each $p>0$ it holds that
\begin{enumerate}[label=(\roman*)]
\item $\mathbb E[X_p]=\frac{3}{2}\cdot\frac{p}{2p+1}$,
\item $\m_p\(x\)$ and $\e_p\(x\)$ are continuous on $[0,1]$ and locally decreasing at a point $x\in \(0,1\)$ if and only if $x\in \left[0,1\right]\setminus C$, and
\item the fixed point equation $\m_p\(x\)=x$ has a unique solution $x^*_p$ in $[0,1]$ given by 
\begin{equation}\label{eq:extension}
x_p^*=\frac16+\frac{5p+4}{12\(2p+1\)}\, .
\end{equation}
\end{enumerate}
In particular, the fixed point $x^*_p$ is decreasing in $p$ and takes values in $\(3/8,1/2\)$.
\end{theorem}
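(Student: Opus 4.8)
The plan is to reproduce the proof of \Cref{thm:cantor}, replacing the reflection symmetry $\F(x)=F(1-x)$ (valid only for $p=1$) by the self-similar structure encoded in conditions (i)--(ii): the law of $X_p$ is the self-similar measure of the iterated function system $\{x\mapsto x/3,\ x\mapsto 1-x/3\}$ with weights $\{\tfrac1{p+1},\tfrac p{p+1}\}$; equivalently, conditionally on $\{X_p\le 1/3\}$ one has $X_p\stackrel{d}{=}X_p'/3$, conditionally on $\{X_p\ge 2/3\}$ one has $X_p\stackrel{d}{=}1-X_p''/3$ (with independent copies $X_p',X_p''$), and $\Pr(1/3<X_p<2/3)=0$. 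Part~(i) is then immediate: conditioning $\ex[X_p]$ on the two branches gives $\ex[X_p]=\tfrac1{p+1}\cdot\tfrac13\ex[X_p]+\tfrac p{p+1}\bigl(1-\tfrac13\ex[X_p]\bigr)$, whence $\ex[X_p]=\tfrac32\cdot\tfrac p{2p+1}$; I also record the constant $c:=1-\tfrac13\ex[X_p]=\tfrac{3p+2}{2(2p+1)}$, which equals $2x_p^*$.

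For part~(ii), $F_p$ is continuous and charges no complementary interval of $\C$, so the set of local constancy of $F_p$ --- hence of $\F_p$, hence of $f:=1/\F_p$ --- is exactly $[0,1]\setminus\C$, of Lebesgue measure $1$. Writing $\m_p=f\phi$ with $\phi(x):=\int_x^1\F_p(u)\,\du$ (strictly decreasing, finite derivative $-\F_p(x)$) and $\e_p=f\tilde\phi$ with $\tilde\phi(x):=\tfrac1x\int_x^1\F_p(u)\,\du$, and noting $f$ is increasing and continuous, Theorem~3.7 of \cite{Do06} yields the dichotomy exactly as in \Cref{thm:cantor}; continuity on $[0,1]$ follows from $\m_p(0)=\ex[X_p]$ and the elementary bound $\m_p(x)=\ex(X_p-x\mid X_p>x)\le 1-x\to0$.

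For part~(iii) I partition $[0,1]$. On $[1/3,2/3]$, $\F_p\equiv\tfrac p{p+1}$, so $\m_p(x)=\tfrac{p+1}p\int_x^1\F_p(u)\,\du$ is affine with slope $-1$, say $\m_p(x)=c-x$; computing $c=\tfrac23+\tfrac{p+1}p\int_{2/3}^1\F_p$ through condition~(ii) ($\int_{2/3}^1\F_p=p\int_0^{1/3}F_p$) and condition~(i) ($\int_0^{1/3}F_p=\tfrac1{3(p+1)}(1-\ex[X_p])$) recovers the $c$ above, and $c-x=x$ has the unique root $x_p^*=c/2$, which simplifies to the right-hand side of \eqref{eq:extension}; one verifies $x_p^*=\tfrac{3p+2}{4(2p+1)}\in(3/8,1/2)\subset(1/3,2/3)$, so this is the only fixed point in $[1/3,2/3]$. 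On $(2/3,1)$ there is no fixed point, since $\m_p(x)\le 1-x<x$. The remaining interval $[0,1/3)$ is the heart of the argument: conditioning $\m_p(x)\F_p(x)=\ex[(X_p-x)\mathbf 1_{\{X_p>x\}}]$ on the two branches gives, for $x\in[0,1/3]$, the self-similar recursion $\m_p(x)\F_p(x)=\tfrac1{3(p+1)}\m_p(3x)\F_p(3x)+\tfrac p{p+1}(c-x)$, which together with $\F_p(x)=\tfrac{p+\F_p(3x)}{p+1}$ rearranges to
\[
\m_p(x)-x=\frac{G(3x)}{3(p+1)\,\F_p(x)},\qquad G(z):=\int_z^1\F_p(u)\,\du-z\F_p(z)+2p(3x_p^*-z).
\]
Hence it suffices to show $G>0$ on $[0,1)$. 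On $[1/3,2/3]$, inserting $\m_p(z)=c-z$ gives $G(z)=\tfrac{2p}{p+1}\bigl[(3p+4)x_p^*-(p+2)z\bigr]$, which is positive there because $11p^2+14p+8>0$. On $[2/3,1)$, the substitution $z=1-w$ with $\F_p(1-w)=pF_p(w)$ turns $G$ into $p\bigl[\int_0^wF_p+(w-1)F_p(w)+2(3x_p^*-1)+2w\bigr]$, and bounding $F_p(w)\le F_p(1/3)$ on $[1/9,1/3]$ and $F_p(w)\le F_p(1/9)$ on $(0,1/9)$ reduces positivity to two more polynomial inequalities. Finally, $G>0$ on $[1/3,1)$ forces $G>0$ on $(0,1)$: if $\m_p(x_0)\le x_0$ for some $x_0\in(0,1/3)$, then $G(3x_0)\le0$; since the last term of $G$ stays positive for $z<1/3$ (because $3x_p^*>1$), this forces $\m_p(3x_0)<3x_0$, and iterating, some $3^kx_0$ falls in $[1/3,1)$, contradicting the previous step --- while $\m_p(0)=\ex[X_p]>0$. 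Thus $x_p^*$ is the unique fixed point in $[0,1]$.

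The closing assertions are routine: $\tfrac{d}{dp}\tfrac{3p+2}{4(2p+1)}=-\tfrac1{4(2p+1)^2}<0$ gives monotonicity, and $x_p^*\to1/2$ as $p\to0^+$, $x_p^*\to3/8$ as $p\to\infty$, so $x_p^*\in(3/8,1/2)$ by continuity. I expect the main obstacle to be the $[0,1/3)$ part of (iii): although each ingredient is elementary, setting up and verifying the self-similar recursion for $\m_p$, the reduction to positivity of $G$ on $[1/3,1)$, and the tripling-orbit closing argument are exactly where the $p$-general proof departs from the explicit two-anchor computation used for $p=1$, and the $[2/3,1)$ estimate on $G$ is the one place that forces a short case split.
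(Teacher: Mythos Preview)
Your argument is correct. Parts~(i), (ii), and the $[1/3,1]$ portion of~(iii) are essentially the paper's proof recast in the iterated-function-system language: where the paper computes $I_1=\int_0^{1/3}F_p$ by manipulating the defining relations directly, you condition on the two branches $X_p=X_p'/3$ and $X_p=1-X_p''/3$; and your bound $\m_p(x)\le 1-x<x$ on $(2/3,1)$ is a cleaner substitute for the paper's appeal to \Cref{lem:cantor}--(ii) at $y=1-\epsilon$. The genuine divergence is the $[0,1/3)$ case of~(iii). The paper handles it only by writing ``we start with $y=0$ and $\m_p(0)=\mathbb E[X_p]$ and again repeatedly apply \Cref{lem:cantor}--(i) (however, this case becomes numerically more complex as $p\to0$)'' --- that is, it reuses the two-anchor strategy from the $p=1$ proof without specifying the anchors or their number for general $p$, and tacitly concedes that a uniform argument is not being given. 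Your self-similar recursion $(\m_p(x)-x)\F_p(x)=G(3x)/\bigl(3(p+1)\bigr)$ together with the tripling-orbit bootstrap is a qualitative improvement: it reduces the entire interval $[0,1/3)$ to the positivity of $G$ on $[1/3,1)$, which you then verify by two explicit polynomial inequalities valid for all $p>0$ (I checked: on $[1/9,1/3]$ the crude bound $\int_0^wF_p\ge0$, $F_p(w)\le 1/(p+1)$ already gives $h(w)\ge(17p^2+7p+6)/\bigl(18(p+1)(2p+1)\bigr)>0$ at $w=1/9$, and the linear lower bound increases in $w$; on $(0,1/9)$ the same scheme with $F_p(w)\le 1/(p+1)^2$ gives $h(0^+)\ge p(p^2+4p+1)/\bigl(2(p+1)^2(2p+1)\bigr)>0$). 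So your route is not just different but more complete than the paper's on this step; the price is the extra bookkeeping in setting up $G$ and the case split on $[2/3,1)$, which you correctly flag as the only delicate point.
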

\begin{remark}\label{rem:extension}
The proof of \Cref{thm:extension} follows the proof of \Cref{thm:cantor} and is deferred to \Cref{app:extension}. For $p=1$, equation \eqref{eq:extension} recovers the result of \Cref{thm:cantor}, namely that $x^*_1=5/12$.\par
\cite{Ch91} presents a simple algorithm to plot the CD and MRL functions of the $p$-singular Cantor type distributions. Based on a simple implementation of this algorithm in Matlab R2019 which allows for multiple iterations before reaching memory capacity, we have produced the following plots of the CD and MRL functions for various values of $p \in \(0,1\)$ and $p\in\(0,+\infty\)$ in \Cref{fig:type<1,fig:type>1}, respectively. The algorithm can be found in \Cref{app:algorithm}. The MRL plot for $p=0.01$, cf. \Cref{fig:type<1}, and for lower values of $p$ not depicted here, indicates that a second fixed point may occur at $x=1/3$ in the limit as $p\to0^+$. However, since $\m_p\(1/3\)=\frac{5p+4}{6\(2p+1\)}>1/3$ for any $p>0$ (cf. proof of \Cref{thm:extension} in \Cref{app:extension}), $x_p^*$ remains the unique fixed point in the whole $[0,1]$ interval for any $p>0$.
\end{remark}

\begin{figure}[!htb]
\centering
\begin{minipage}[b]{.5\textwidth}
  \centering
  \includegraphics[width=\linewidth]{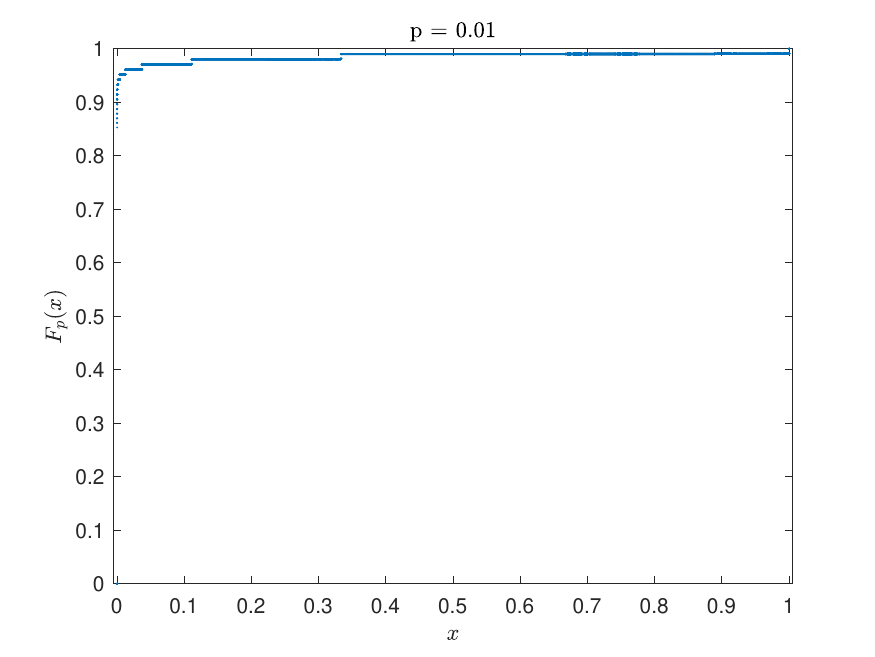}
\end{minipage}
\begin{minipage}[b]{.5\textwidth}
  \centering
  \includegraphics[width=\linewidth]{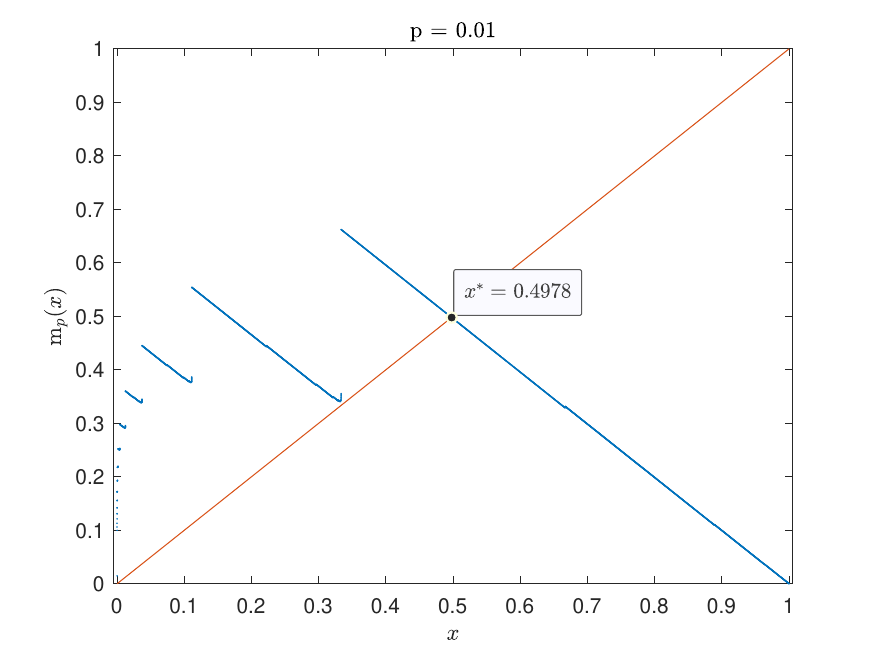}
\end{minipage}
\begin{minipage}[b]{.5\textwidth}
  \centering
  \includegraphics[width=\linewidth]{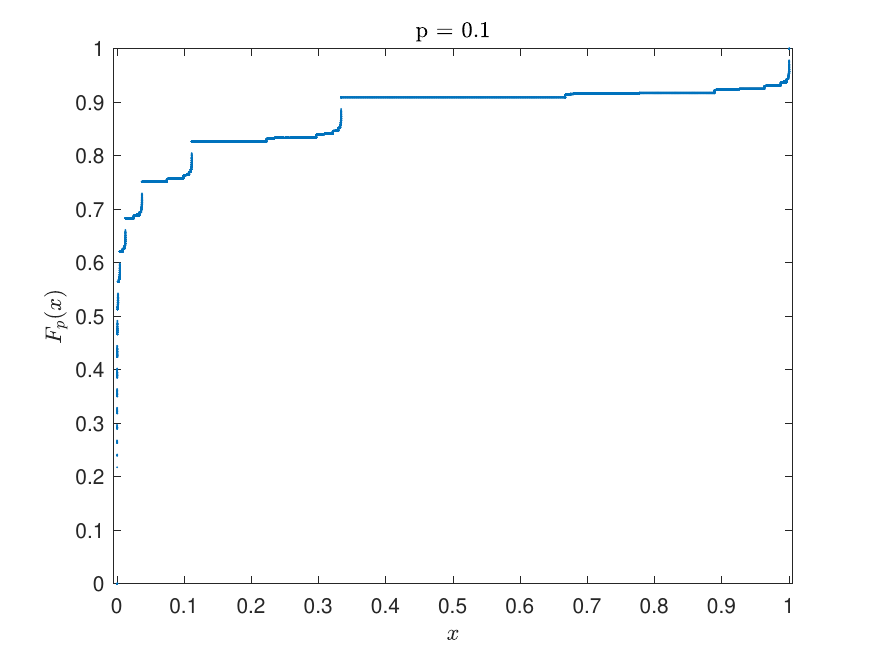}
\end{minipage}
\begin{minipage}[b]{.5\textwidth}
  \centering
  \includegraphics[width=\linewidth]{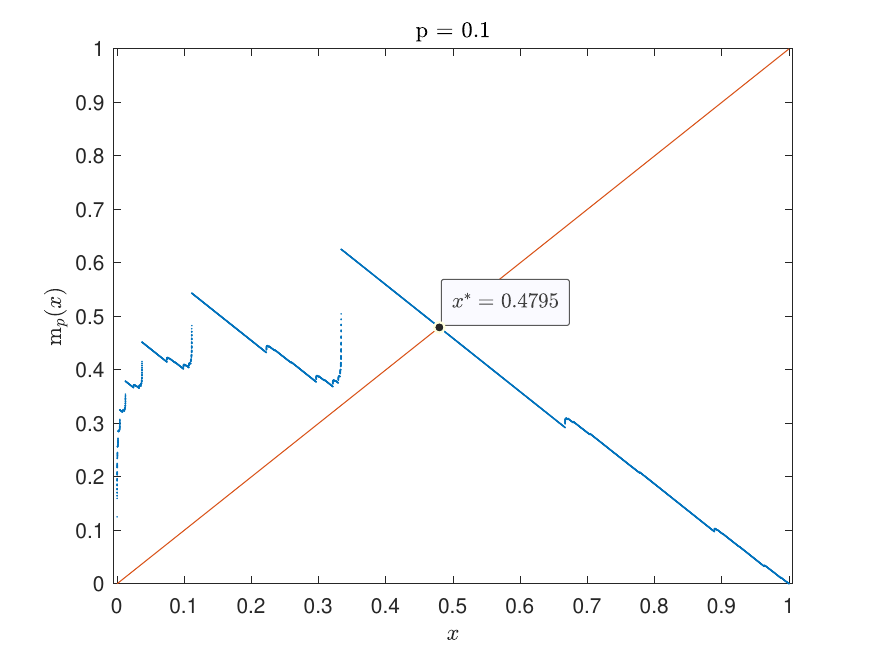}
\end{minipage}
\begin{minipage}[b]{.5\textwidth}
  \centering
  \includegraphics[width=\linewidth]{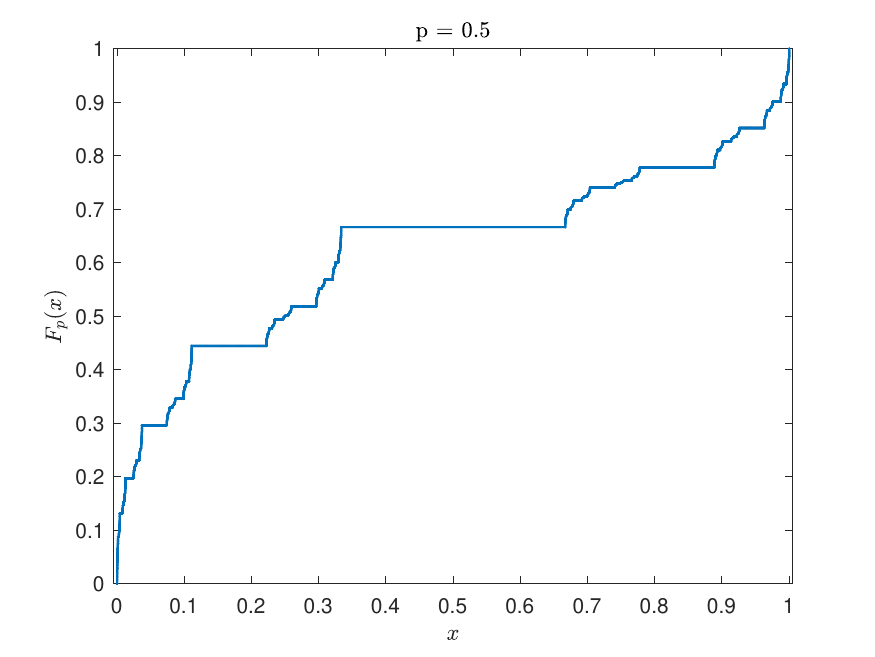}
\end{minipage}
\begin{minipage}[b]{.5\textwidth}
  \centering
  \includegraphics[width=\linewidth]{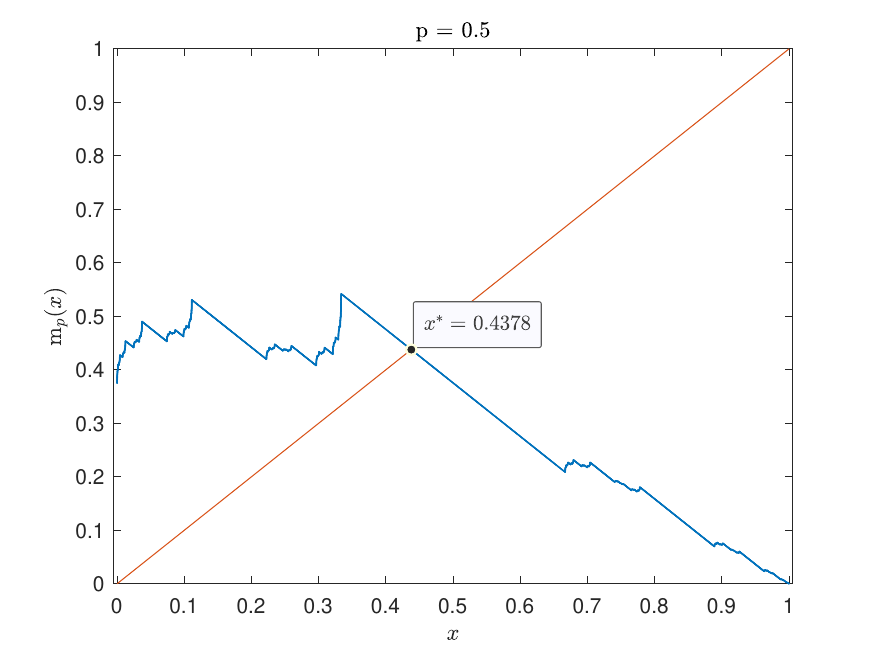}
\end{minipage}
\caption{Cumulative distribution functions $F_p\(x\)$ (left column) and mean residual life functions $\m_p\(x\)$ with highlight of the single fixed point (right column) of the parametric family of $p$-singular distributions for various values of $p<1$. The unique fixed point is decreasing in $p$ and takes values close to $0.5$ for values of $p$ approximating $0^+$. The plots have been produced using 1000 initial points and 17 iterations of the algorithm of \cite{Ch91} as described in \Cref{app:algorithm}.}
\label{fig:type<1}
\end{figure}

\begin{figure}[!htb]
\begin{minipage}[b]{.5\textwidth}
  \centering
  \includegraphics[width=\linewidth]{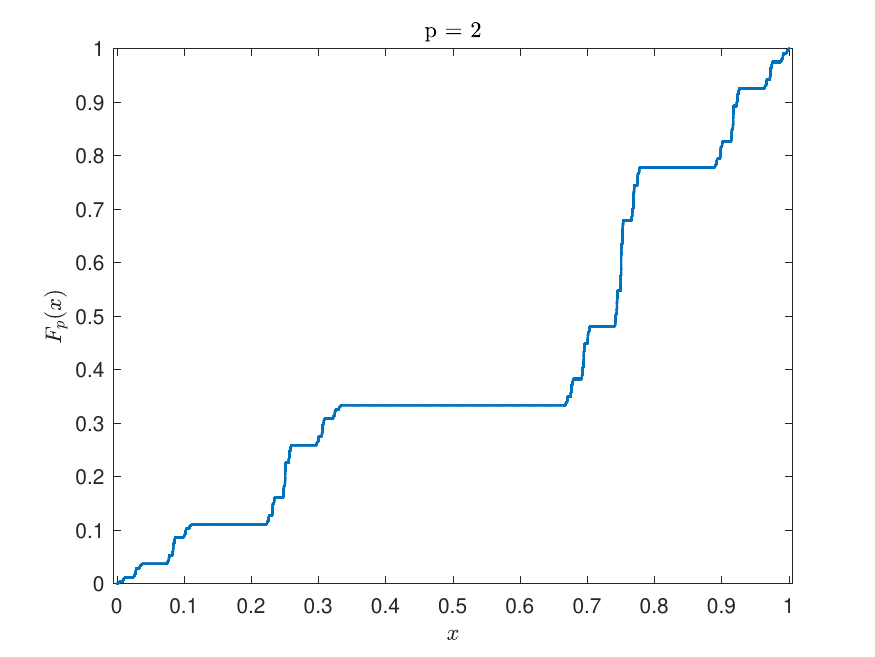}
\end{minipage}
\begin{minipage}[b]{.5\textwidth}
  \centering
  \includegraphics[width=\linewidth]{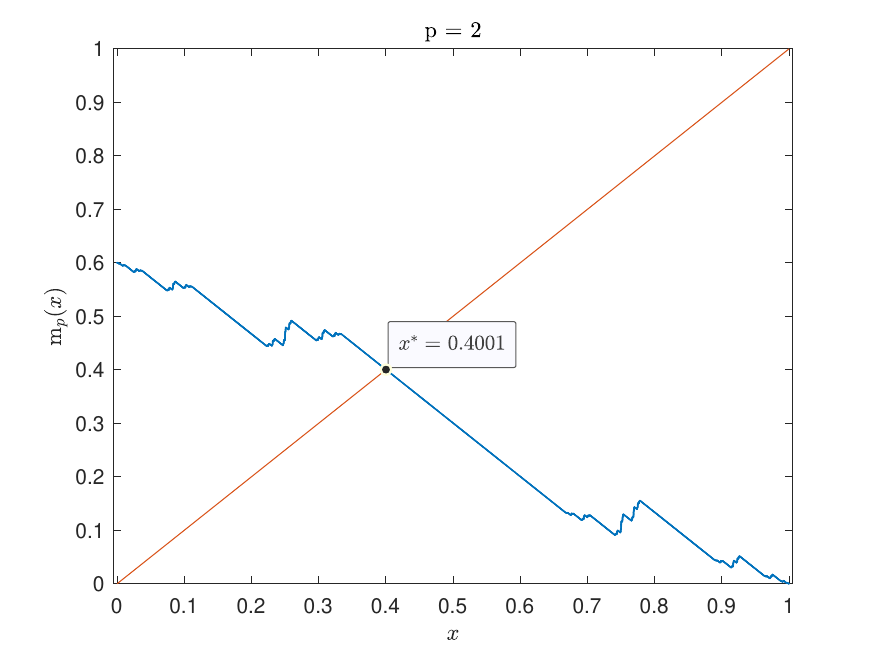}
\end{minipage}
\begin{minipage}[b]{.5\textwidth}
  \centering
  \includegraphics[width=\linewidth]{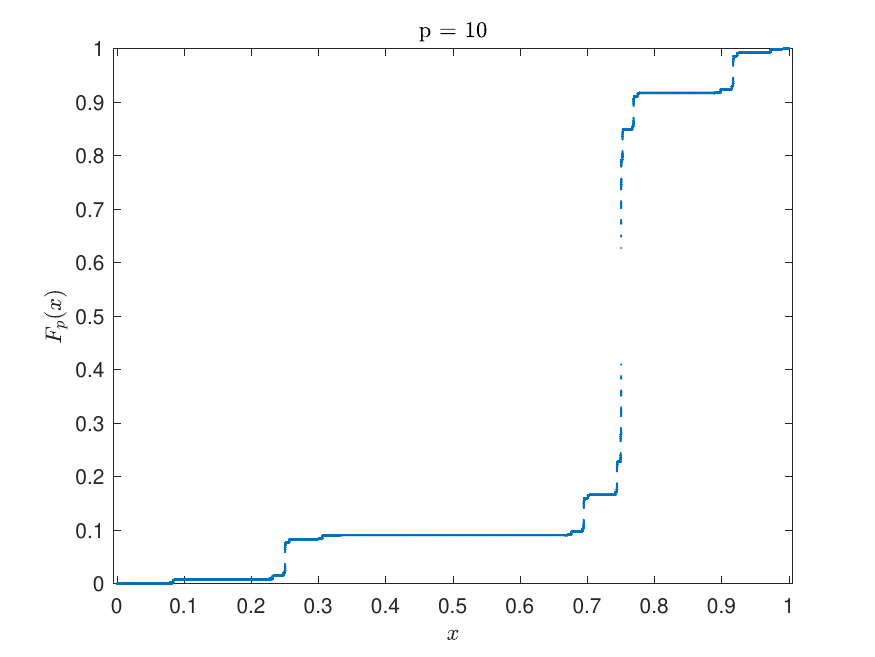}
\end{minipage}
\begin{minipage}[b]{.5\textwidth}
  \centering
  \includegraphics[width=\linewidth]{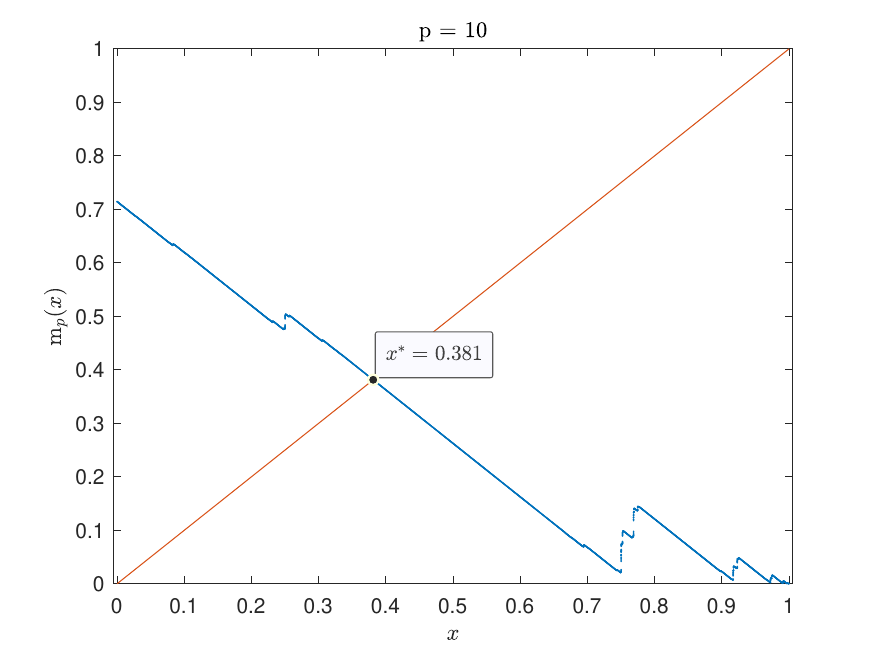}
\end{minipage}
\begin{minipage}[b]{.5\textwidth}
  \centering
  \includegraphics[width=\linewidth]{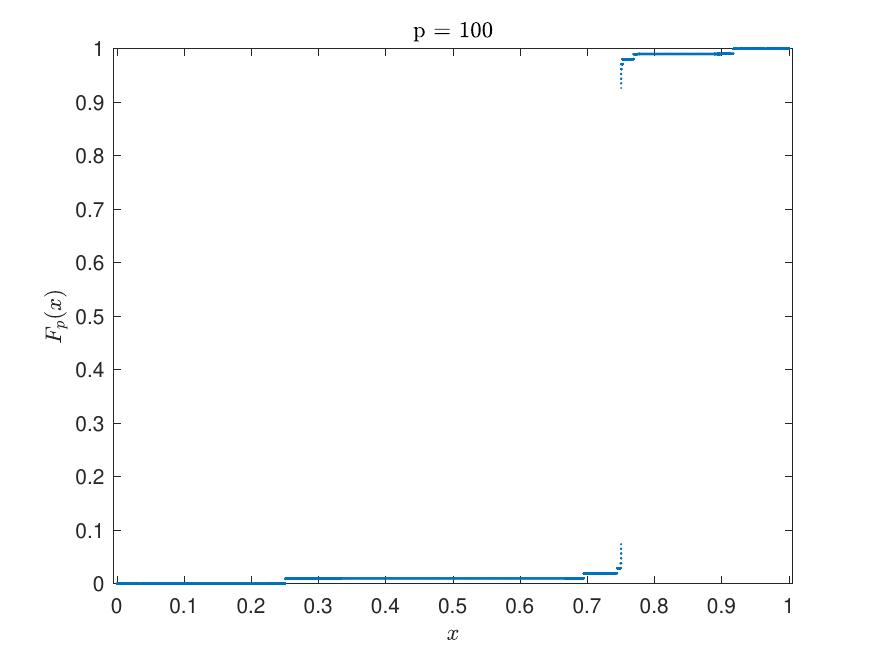}
\end{minipage}
\begin{minipage}[b]{.5\textwidth}
  \centering
  \includegraphics[width=\linewidth]{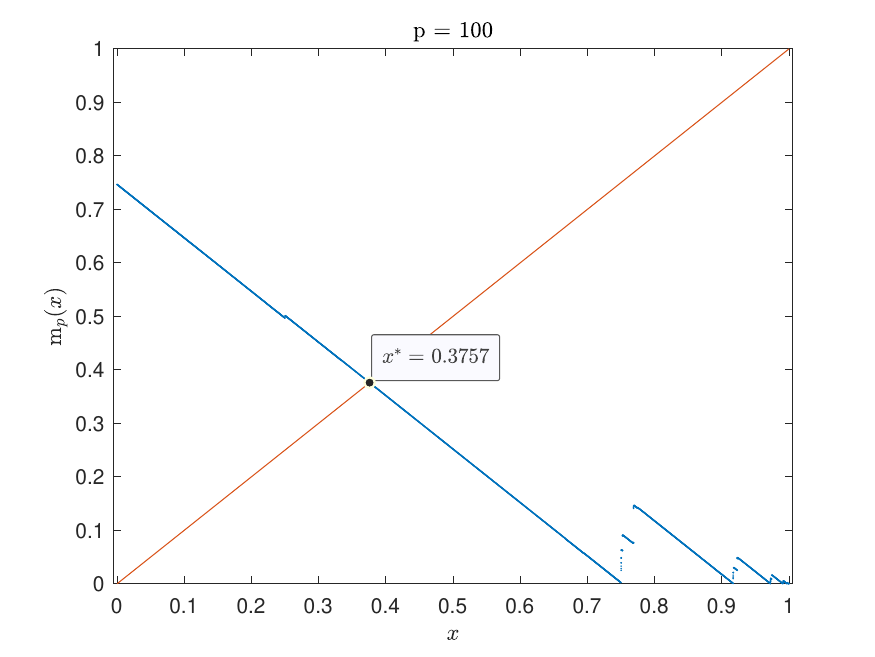}
\end{minipage}
\caption{Cumulative distribution functions $F_p\(x\)$ and mean residual life functions $\m_p\(x\)$ of the parametric family of $p$-singular with highlight of the single fixed point (right column) of the parametric family of $p$-singular distributions for various values of $p>1$. The unique fixed point is decreasing in $p$ and takes values close to $0.375$ for values of $p$ approximating $+\infty$. The plots have been produced using 1000 initial points and 17 iterations of the algorithm of \cite{Ch91} as described in \Cref{app:algorithm}.}
\label{fig:type>1}
\end{figure}

\section{An application: pricing in markets with bandwagon effects}
The above derived properties of the MRL functions --- their local monotonicity and the uniqueness of a fixed point --- render Cantor-type distributions more relevant for practical applications than generally thought. In the context of modelling demand uncertainty, the MRL function describes the mean residual demand for each demand level \cite{Le18}.\par
The shape of the MRL function of $p$-singular Cantor distributions, cf. \Cref{fig:type<1,fig:type>1}, can be utilized to model markets with recurrent \emph{bandwagon effects} \cite{Her09,Bar19}. Such markets are characterized by positive demand shocks whenever demand reaches (or exceeds) some threshold. For instance, if a product is adopted by a certain number of consumers (e.g., a cluster in the population), then the expected residual demand may increase due to increased visibility of the product which attracts more consumers. The illustrations in \Cref{fig:type<1,fig:type>1} indicate that the location and intensity of these effects --- that correspond to continuous jumps of the MRL function (the visible discontinuities in the plots are due to the finite number of executions of the plotting algorithm) --- are calibrated by the selection of parameter $p$: lower values of $p$ correspond to more intense effects that appear at lower demand values, whereas larger values of $p$ smoothen the effects and move them towards the upper end of possible demand values. The original Cantor distribution, $p=1$, corresponds to a symmetric case, cf. \Cref{fig:cantor}.\par  
From the perspective of a monopolistic seller who is facing a market with this kind of effects, the unique fixed point of the MRL function of Cantor-type distributions turns out to be particularly useful in the design of a unique optimal pricing policy. In turn, and from a theoretical perspective, a unique optimal price allows for accurate predictions and meaningful comparative statics analysis (study of the response of the optimal price to market parameters). The details of modelling such an application are given in \Cref{app:application} (see \cite{Le18} for the general statement of this problem).
\begin{application}\label{app:application}
A seller is selling goods to a market with linear demand $q\(x\)=X-x$ where $x$ denotes the price, $q\(x\)$ the quantity that is demanded at price $x$ and $X$ the demand level. $X$ is uncertain at the time of the sellers' pricing decision and distributed according to a $p$-singular Cantor-type distribution with CDF $F_p$. Parameter $p$ is selected according to the intensity and location of the anticipated positive demand shocks (bandwagon effects). The seller is concerned to determine the optimal price $x^*$ that will maximize their expected payoff $\Pi\(x\):=x\ex \(X-x\)_+$ where $\(X-x\)_+$ is the positive part of $\(X-x\)$, i.e., $\(X-x\)_+=X-x$ if $X\ge x$ and is $0$ otherwise. As shown in \cite{Le18}, the set of optimal prices coincides with the set of fixed points of the MRL function, i.e., any optimal price $x^*$ satisfies $\m\(x^*\)=x^*$.  By \Cref{thm:extension}-(iii), $x_p^*$ is unique for any $p>0$ and equal to $x_p^*=\frac16+\frac{5p+4}{12(2p+1)}$. Since $x_p^*$ is decreasing in $p$, the seller charges a higher price in a market with earlier anticipated bandwagon effects, i.e., bandwagon effects that occur at lower demand level. Hence, the expectation of such effects has an adverse impact on both consumer and social welfare which are known to diminish as prices increase.
\end{application}

\section{Conclusions}
\label{sec:conclusions}
In this paper, we studied the mean residual life (MRL) functions of the Cantor and the $p$-singular, Cantor-type distributions as defined by \cite{Man83} and \cite{Ch91} and showed that for any $p>0$, the MRL function is continuous, locally decreasing at all points outside the Cantor sets and has a unique fixed point which can be explicitly determined as a decreasing function of parameter $p$. Putting these findings in the context of a larger study on optimal pricing with demand uncertainty, cf. \cite{Le18}, we provided an application to indicate that these patterns and properties of the MRL functions may render Cantor-type distributions useful in the modelling of uncertain demand in markets with bandwagon effects. Parameter $p$ calibrates the intensity and location of the effects, i.e., the demand threshold at which the positive demand shocks occur, whereas the unique fixed point of the MRL function corresponds to a monopolistic seller's optimal price. Accordingly, monopolistic prices are higher in markets with earlier anticipated and more intense bandwagon effects, cf. \Cref{fig:type<1,fig:type>1}, since such markets correspond to lower values of $p$. \par
In addition to their independent theoretical interest, these findings offer evidence that singular distributions are more tractable than commonly thought and suggest that they may be used in the context of economic modelling to accurately capture real-life economic markets.

{\bf Acknowledgements.} The authors thank an anonymous referee for the constructive comments that led to a substantial extension of the original paper. Stefanos Leonardos gratefully acknowledges support by a scholarship of the Alexander S. Onassis Public Benefit Foundation. All plots of this paper have been created in Matlab R2019b under an academic license provided by the Singapore University of Technology and Design (SUTD).

\bibliography{cantorbib}
\appendix
\section{Appendix}

\subsection{Omitted Materials: Section 2}
The proof of statement (iii) of \Cref{thm:cantor} relies on \Cref{lem:cantor}. \Cref{lem:cantor} does not make explicit use of the properties of the Cantor distribution and holds for arbitrary distributions with continuous cumulative distribution function on $[0,1]$.

\begin{lemma}\label{lem:cantor}
Let $y\in \left[0,1\right]$, then
\begin{enumerate}[label=$\(\roman*\)\;$,noitemsep]
\item $\m\(x\)-x>\m\(y\)-y-2\delta$, for all $x\in\left[0,1\right]$ and $\delta>0$ such that $y\le x<y+\delta$,
\item $\m\(x\)-x<\m\(y\)-y+2\delta$, for all $x\in\left[0,1\right]$ and $\delta>0$ such that $y-\delta<x\le y$.
\item $\,$If $F$ is constant on $\left[y, y+\delta\right]\subset\left[0,1\right]$, then $\m\(y+t\)=\m\(y\)-t$, for all $0\le t\le \delta$.
\end{enumerate}
\end{lemma}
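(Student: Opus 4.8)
The plan is to reduce both (i) and (ii) to a single monotonicity property of the MRL function, and to obtain (iii) by substituting directly into the integral formula for $\m$. Throughout I assume $\F\(x\)>0$ for $x<1$, which is exactly what makes $\m$ well defined; this holds for the Cantor and the $p$-singular distributions, and it is implicit in (iii) as well, since $\F\(y\)=0$ would force $F\equiv1$ on $\left[y,1\right]$ and then the left-hand side $\m\(y+t\)$ would not be defined.

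The key step is to prove that $x\mapsto\m\(x\)+x$ is non-decreasing on $\left[0,1\right]$. Conceptually this is the familiar fact that $\ex\(X\mid X>x\)=\m\(x\)+x$ grows as one conditions on a thinner upper tail. To make it rigorous I would use only that $\F$ is non-increasing: for $y\le x<1$, writing $I\(x\):=\int_x^1\F\(u\)\du$, one has $\int_y^x\F\(u\)\du\le\(x-y\)\F\(y\)$ and $1/\F\(x\)\ge1/\F\(y\)$, and a one-line manipulation gives
\[\m\(x\)+x-\m\(y\)-y=I\(x\)\Big(\tfrac1{\F\(x\)}-\tfrac1{\F\(y\)}\Big)+\Big(\(x-y\)-\tfrac1{\F\(y\)}\int_y^x\F\(u\)\du\Big)\ge0.\]
The elementary bound $\m\(x\)\le1-x$ (again from $\F$ non-increasing) shows $\m\(x\)+x\le1=\m\(1\)+1$, so monotonicity persists up to the endpoint $x=1$ under the convention $\m\(1\)=0$.

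Granting this, the rest is bookkeeping. For (i): if $y\le x<y+\delta$ then $\m\(x\)-x=\bigl(\m\(x\)+x\bigr)-2x\ge\bigl(\m\(y\)+y\bigr)-2x=\bigl(\m\(y\)-y\bigr)-2\(x-y\)>\m\(y\)-y-2\delta$, the last inequality strict because $x-y<\delta$. Statement (ii) is the mirror image: if $y-\delta<x\le y$ then $\m\(x\)+x\le\m\(y\)+y$, hence $\m\(x\)-x\le\bigl(\m\(y\)-y\bigr)+2\(y-x\)<\m\(y\)-y+2\delta$. For (iii): constancy of $F$ on $\left[y,y+\delta\right]$ gives $\F\(y+t\)=\F\(y\)$ and $\int_y^{y+t}\F\(u\)\du=t\,\F\(y\)$ for $0\le t\le\delta$; substituting these into $\m\(y+t\)=\frac1{\F\(y+t\)}\bigl(\F\(y\)\m\(y\)-\int_y^{y+t}\F\(u\)\du\bigr)$, where $\int_y^1\F\(u\)\du=\F\(y\)\m\(y\)$, yields $\m\(y+t\)=\m\(y\)-t$.

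I do not expect a genuine obstacle here: no property of the Cantor set enters, only that $F$ is continuous — in fact only that $F$ is monotone — on $\left[0,1\right]$. The one thing requiring care is the handling of the right endpoint $x=1$ and checking that the inequalities in (i) and (ii) come out strict, which they do precisely because the hypotheses $x<y+\delta$ and $x>y-\delta$ are strict.
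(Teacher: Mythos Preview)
Your proof is correct and follows essentially the same route as the paper: both arguments hinge on the inequality $\m\(x\)\ge\m\(y\)-\(x-y\)$ for $y\le x$, derived from the monotonicity of $\F$ together with the bound $\int_y^x\F\(u\)\du\le\(x-y\)\F\(y\)$, and then obtain (iii) by noting that equality holds when $F$ is constant on $[y,y+\delta]$. Your framing of this inequality as monotonicity of $x\mapsto\m\(x\)+x=\ex\(X\mid X>x\)$ is a clean conceptual repackaging, and your bookkeeping (deducing (i)--(ii) in one step via $\m\(x\)-x\ge\m\(y\)-y-2\(x-y\)$) is slightly tighter than the paper's, which applies the bound $x-y<\delta$ twice.
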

\begin{proof}
For statement (i) let $y\in [0,1), \delta>0$ and $y\le x<y+\delta$. Then, by the monotonicity of $F$ 
\begin{align}\m\(x\)&=\frac{1}{1-F\(x\)}\int_{x}^{1}\(1-F\(u\)\)\du\ge\frac{1}{1-F\(y\)}\(\int_{y}^{1}\(1-F\(u\)\)\du-\int_{x}^y1-F\(u\)\du\)\nonumber\\&
=\m\(y\)-\frac{1}{1-F\(y\)}\int_{y}^x\(1-F\(u\)\)\du\ge \m\(y\)-\(x-y\)\label{can}
\end{align}
Since $x<y+\delta$, \eqref{can} implies that $\m\(x\)>\m\(y\)-\delta$ and hence, that $\m\(x\)-x>\m\(y\)-x-\delta>\m\(y\)-y-2\delta$. Statement (ii) follows similarly. For statement (iii), it suffices to write $x=y+t$ and observe that \eqref{can} now holds throughout with equality due to $F$ being constant on $\left[y,y+\delta\right]$. 
\end{proof}
\Cref{lem:cantor}-(iii) provides an alternative proof to the ``if part'' of \Cref{thm:cantor}-(ii) since the cumulative distribution function $F$ of the Cantor distribution is piecewise constant on $\left[0,1\right]\setminus C$. 

\subsection{Omitted Materials: Section 3}
\label{app:extension}

\begin{proof}[Proof of \Cref{thm:extension}]
Let $p>0$. To prove statement (i) of \Cref{thm:extension}, we need to estimate the integral $\int_{0}^{1}F_p\(u\)\du$. Using property (i) in the definition of the $p$-singular distributions and by a change of variables in the integration, we obtain that 
\[\int_{0}^1F_p\(u\)\du=\int_{0}^1\(p+1\)F_p\(u/3\)\du=3\(p+1\)\int_{0}^{1/3}F_p\(u\)\du\,.\]
By a change of variables and using property (ii) of the definition, we further obtain that
\[\int_{2/3}^1F_p\(u\)=\int_{0}^{1/3}F_p\(1-u\)\du=\int_{0}^{1/3}1-F_p\(u\)\du=\frac13-p\int_{0}^{1/3}F_p\(u\)\du\,.\]
To proceed, let $I_1:=\int_{0}^{1/3}F_p\(u\)\du$. Then, since $F_p\(x\)=\frac{1}{p+1}$ for any $x\in[1/3,2/3]$ by definition, we combine the previous two equations to obtain 
\[3\(p+1\)I_1=I_1+\frac{1}{3}\cdot\frac{1}{p+1}+\frac13-pI_1\]
which yields that 
\[I_1=\frac16\cdot \frac{p+2}{(p+1)(2p+1)}\]
Hence, 
\[\mathbb E[X_p]=\int_{0}^1\(1-F_p\(u\)\)\du=1-3\(p+1\)I_1=\frac32\cdot\frac{p}{2p+1}\,.\]
Statement (ii) follows in the same manner as in the proof of \Cref{thm:cantor} by noting that the $p$-transformation affects neither the $x$-axis (hence, the interval of constancy of $F_p$ remains the same) nor the continuity of $F$. To prove statement (iii), let $x\in[1/3,2/3]$. Then, by \Cref{lem:cantor}-(iii), it holds that $\m_p\(x\)=\m_p\(1/3\)-\(x-1/3\)$, with 
\begin{align*}
\m_p\(1/3\)&=\frac{1}{1-F_p\(1/3\)}\int_{1/3}^1\(1-F_p\(u\)\)\du\\&
=\frac{1}{1-\frac{1}{p+1}}\(\int_{1/3}^1\du-\int_{1/3}^{2/3}\frac{1}{p+1}\du-\int_{2/3}^1F_p\(u\)\du\)\\
&=\frac{p+1}{p}\(\frac23-\frac{1}{3\(p+1\)}-\frac13+pI_1\)=\frac{5p+4}{6\(2p+1\)}
\end{align*}
where in the last step, we used that $I_1=\frac{1}{6}\cdot\frac{p+2}{\(p+1\)\(2p+1\)}$. Hence, the fixed point equation $\m_p\(x\)=x$ on $x\in[1/3,2/3]$ can be equivalently written as 
\begin{align*}
\m_p\(1/3\)-\(x-1/3\)=x
\end{align*}
which, by substitution of the derived value of $\m_p\(1/3\)$ yields the fixed point $x_p^*$
\[\frac{5p+4}{6\(2p+1\)}+\frac13=2x_p^* \iff x_p^*=\frac{1}{6}+\frac{5p+4}{12\(2p+1\)}\,.\]
To establish uniqueness, we again apply \Cref{lem:cantor}-(ii) at $y=1-\epsilon$ for $\epsilon>0$ arbitrary (as in the proof of \Cref{thm:cantor}) which yields that $\m_p\(x\)<x$ for any $x\in (1/2,1]$, i.e., that there exists no fixed point in $(1/2,1]$. To establish that $\m_p\(x\)>x$ for all $x\in[0,1/3)$, we start with $y=0$ and $\m_p\(0\)=\mathbb E[X_p]$ and again repeatedly apply \Cref{lem:cantor}-(i) (however, this case becomes numerically more complex as $p\to0$.). Finally, as mentioned in \Cref{rem:extension}, $\m_p\(1/3\)=\frac{5p+4}{6\(2p+1\)}>1/3$ for all $p>0$, which shows that there is no fixed point at $x=1/3$ for any $p>0$ and concludes the proof. 
\end{proof}

\subsection{Algorithm Implementation}
\label{app:algorithm}

In this section, we present the main part of the algorithm that we used to produce \Cref{fig:cantor} and \Cref{fig:type<1,fig:type>1}. The algorithm is due to \cite{Ch91} and the proposed implementation utilizes the routine \verb|unique| of Matlab 2019b to avoid an explicit recursion (although the points at the $x$- and $y$-axes are partially recalculated in every iteration) which results in a very simple implementation shown in the first frame (\verb|function cantor_cdf_mrl|). The algorithm starts with the following initialization 
\begin{enumerate}[label=I.]
\item $F\(x\) = 1/2$ on the interval $[1/3,2/3]$ and $F(0) = 0, F(1) = 1$. 
\end{enumerate}
and then repeats the following steps at each stage
\begin{enumerate}[noitemsep, label=R\arabic*.]
\item shrinks the $x$ axis by a factor of $1/3$,
\item shrinks the $y$ axis by a factor of $1/(p+1)$,
\item flips the resultant graph across the line $x = 1/2$, and then again across the line $y = 1/\(p+1\)$.
\end{enumerate}

For the parametric family of $p$-singular Cantor-type distributions, the $x$-axis transformations remain the same for all values of $p$. The MRL and GMRL functions are then constructed by standard calculations. \par
Obviously, to speed up the \verb|for| loop, one may remove the redundant step of keeping $x$ and $F$ at every iteration. This would be indeed sufficient to plot the CDF. However, to obtain the MRL function one would need to store the entire vector of values and this would again increase the computational burden (or memory requirement) of the \verb|for| loop. For completeness, one option to implement the iteration without the redundant calculations is presented in the second frame (\verb|function cantor_cdf|).


\begin{lstlisting}
function cantor_cdf_mrl

% Initialize 
p=1; %p-singular distributions with p=1 for the Cantor distribution
density=500; %customize number of initial points
iterations=20; %customize number of iterations of the algorithm
x=[0 linspace(1/3,2/3,density) 1]; %initial points at the x-axis
F=[0 linspace(1/(p+1),1/(p+1),density) 1]; %initial values of the cdf at the y-axis. 

for k=1:iterations
    %x-axis: shrink the x-axis by 1/3, flip the result across the x=1/2 line and remove duplicate points for faster implementation. This step is not affected by the selection of p.
    [x, ia]=unique([x/3 x 1-x/3]); 
    %y-axis: shrink the y-axis by 1/p+1 and flip across the line y=1/(p+1) (1/2 for the standard Cantor distribution).   
    F=[F/(p+1) F 1-F*(p/(p+1))]; 
    F=F(ia); %index values according to the duplicate free vector x.
end

plot(x,F,'.', 'MarkerSize', 3) %Use plot instead of scatter(x,F) for faster implementation.
\end{lstlisting}

\begin{lstlisting}
function cantor_cdf

% Initialize as above
...
figure;
hold all
for k=1:iterations
    plot(x, F, '.');
    x=[x*u, 1-x*u];
    F=[F*v, 1-p*v*F];
end
\end{lstlisting}

\end{document}